\documentclass[11pt]{amsart}

\usepackage[T1]{fontenc}
\usepackage[utf8]{inputenc}
\usepackage{lmodern}
\usepackage{microtype}
\usepackage{amsmath,amssymb,amsthm,mathtools}
\usepackage{enumitem}
\usepackage[colorlinks=true,linkcolor=blue,citecolor=blue,urlcolor=blue]{hyperref}

\newtheorem{theorem}{Theorem}[section]
\newtheorem{lemma}[theorem]{Lemma}

\newtheorem{corollary}[theorem]{Corollary}
\theoremstyle{definition}
\newtheorem{conjecture}[theorem]{Conjecture}
\newtheorem{remark}[theorem]{Remark}

\newcommand{\RR}{\mathcal{R}}
\newcommand{\PP}{\mathbb{P}}
\newcommand{\EE}{\mathbb{E}}
\newcommand{\mtwo}{m_2}

\DeclareMathOperator{\vtx}{v}
\DeclareMathOperator{\edg}{e}

\title[Ramsey-finite graph pairs]{Ramsey-finite graph pairs: A complete solution to the Burr-Erd\H{o}s-Faudree-Rousseau-Schelp conjectures}
\author{Yaping Mao}
\thanks{Supported by the National Natural Science Foundation of China (Nos.~12471329 and 12061059).}
\address{Academy of Plateau Science and Sustainability, and School of Mathematics and Statistics, Qinghai Normal University, Xining 810008, China}
\email{yapingmao@outlook.com; myp@qhnu.edu.cn}
\subjclass[2020]{05C55, 05D10, 05C70, 05C80}
\keywords{Ramsey-minimal graph, Ramsey-finite pair, matching, star forest, asymmetric Ramsey property}

\begin{document}
\begin{abstract}
For finite graphs $G$ and $H$, let $\RR(G,H)$ denote the isomorphism classes of Ramsey-minimal graphs for $(G,H)$. We prove two 1981 conjectures of Burr, Erd\H{o}s, Faudree, Rousseau, and Schelp: Ramsey-finiteness is preserved by adjoining disjoint matchings, and $(G,H)$ is Ramsey-infinite unless both graphs are odd stars or one graph has a $K_2$ component. We also replace Burr's stronger 1979 survey characterization by the correct necessary-and-sufficient form: apart from the matching case and the odd-star-with-matchings case, the only additional finite pairs are Faudree's star-forest family.\\[2mm]
{\bf Keywords:} Ramsey-minimal graphs, Ramsey-finiteness, Graph Ramsey theory,
Matchings, Star forests\\[2mm]
{\bf AMS subject classification 2020:} 05C55; 05D10; 05C70; 05C80.
\end{abstract}

\maketitle

\section{Introduction}

All graphs in this paper are finite and simple. Unless explicitly stated otherwise, all target graphs have no isolated vertices and have at least one edge. For a graph $X$, let $V(X)$ and $E(X)$ denote its vertex and edge sets, and write $\vtx(X)=|V(X)|$ and $\edg(X)=|E(X)|$. If $Y$ is isomorphic to a subgraph of $X$, we write $Y\preceq X$. Throughout, a subgraph need not be induced; when a particular subgraph is fixed, we also write $Y\subseteq X$. For an edge $e\in E(X)$, the graph $X-e$ is obtained by deleting $e$ and keeping the same vertex set. The disjoint union of two graphs is denoted by $X\cup Y$, and all graph unions in the paper are vertex-disjoint. For an integer $j\ge0$, $jK_2$ denotes the disjoint union of $j$ copies of $K_2$; thus $jK_2$ is a matching of size $j$, and $0K_2$ is the empty graph. A \emph{$K_2$-component}, also called a single-edge component, means a connected component isomorphic to $K_2$, not merely an edge subgraph. We write
\[
  S(r)=K_{1,r}
\]
for the star with $r$ edges, so that $S(1)=K_2$. A \emph{star forest} is a forest whose connected components are stars, and an \emph{odd star} is a star $S(r)$ with $r$ odd. In statements where the matching case is separated off, the odd stars are written with at least three edges to avoid duplicating the case $S(1)=K_2$.

For graphs $F$, $G$, and $H$, we write
\[
  F\to(G,H)
\]
if every red-blue coloring of the edges of $F$ contains either a red copy of $G$ or a blue copy of $H$. A graph $F$ is \emph{Ramsey-minimal} for the pair $(G,H)$ if $F\to(G,H)$ but no proper subgraph of $F$ has this property. Such an $F$ has no isolated vertices; consequently, under our standing hypotheses this is equivalent to requiring $F-e\not\to(G,H)$ for every edge $e\in E(F)$. The family of all Ramsey-minimal graphs for $(G,H)$, considered up to isomorphism, is denoted by $\RR(G,H)$. The pair $(G,H)$ is called \emph{Ramsey-finite} if $\RR(G,H)$ is finite, and \emph{Ramsey-infinite} otherwise. The Ramsey relation is symmetric in the two target graphs: by swapping the two colors, $F\to(G,H)$ if and only if $F\to(H,G)$, and hence $\RR(G,H)=\RR(H,G)$ as a set of isomorphism classes.

This finiteness problem is different from the usual Ramsey-number problem. Ramsey's theorem guarantees the existence of some complete graph $K_N$ with $K_N\to(G,H)$, but it does not describe the edge-minimal graphs with this property. The class $\RR(G,H)$ records all Ramsey-minimal witnesses to the Ramsey property. In many natural cases it is infinite, often with members of arbitrarily large order and very different structure. The Ramsey-finite cases are therefore exceptional, and the main question is to identify precisely which pairs $(G,H)$ produce only finitely many minimal witnesses.

The study of Ramsey-minimal graphs began with the work of Burr, Erd\H{o}s, and Lov\'asz \cite{BEL76}, who investigated graphs of Ramsey type and already exhibited infinite phenomena for complete graphs. Ne\v{s}et\v{r}il and R\"odl proved several foundational infinitude results: for instance, $\RR(G,H)$ is infinite when both graphs are sufficiently connected, and also in the forest setting when neither graph is a union of stars \cite{NR78a,NR78b}. These results introduced a guiding theme that persists throughout the subject: most graph pairs are Ramsey-infinite, while finite pairs arise from special low-complexity structures such as matchings and stars.

The first major finite family was obtained by Burr, Erd\H{o}s, Faudree, and Schelp, who proved that if one member of the pair is a matching, then the pair is Ramsey-finite \cite{BEFS78}. Around the same period, a series of papers of Burr, Erd\H{o}s, Faudree, Rousseau, and Schelp treated stars, star forests, and forests. For star forests with no single-edge components, they proved that Ramsey-finiteness occurs exactly when both graphs are single odd stars \cite{BERS81}. For forests more generally, they proved broad infinitude theorems whenever a non-star component is present \cite{BEFRS82}. Faudree later completed the forest case by giving an explicit classification of the Ramsey-finite pairs of forests \cite{Fa91}.

There is also a substantial body of work on Ramsey-infinite pairs outside the forest setting. Burr, Erd\H{o}s, Faudree, and Schelp proved infinitude results for pairs consisting of a star and a nontrivial connected or $2$-connected graph \cite{BEFS80}. Burr, Faudree, and Schelp developed block-based criteria for Ramsey-infiniteness \cite{BFS85}. \L{}uczak proved that if one graph is a forest that is not a matching and the other contains a cycle, then the pair is Ramsey-infinite \cite{Lu94}. Bollob\'as, Donadelli, Kohayakawa, and Schelp studied further cyclic cases, including cycle versus suitable $2$-connected graphs \cite{BDKS01}. R\"odl and Siggers later showed that for complete graphs there are exponentially many nonisomorphic Ramsey-minimal graphs of bounded order, strengthening the qualitative infinitude picture \cite{RS08}.

Ramsey-minimal graphs have also been studied from several more structural viewpoints. Examples include the classification of particular minimal families such as $\RR(K_{1,2},K_{1,m})$ and $\RR(K_{1,2},K_3)$ \cite{BHS04,BSS05}, constructions involving stars versus graph families \cite{BH12}, and descriptions of minimal graphs when one target is a matching \cite{WBAS17}. Another active direction concerns minimum degrees of Ramsey-minimal graphs. Starting from the parameter introduced by Burr, Erd\H{o}s, and Lov\'asz \cite{BEL76}, later work obtained bounds and exact results for many classes of graphs and color numbers; see, for example, \cite{SZZ10,FGLPS16,HRS18,BBL22}. These developments are not needed in the proofs below, but they place the present finiteness problem within the broader theory of Ramsey-minimal graphs.

The only recent ingredient used in this paper comes from random Ramsey theory. R\"odl and Ruci\'nski located the threshold for the symmetric random Ramsey property \cite{RR95}. The asymmetric analogue was conjectured by Kohayakawa and Kreuter \cite{KK97}. Its $1$-statement was proved by Mousset, Nenadov, and Samotij \cite{MNS20}; further threshold cases were obtained by Kuperwasser and Samotij \cite{KS24}; and the conjecture was resolved by Christoph, Martinsson, Steiner, and Wigderson \cite{CMSW25}. We use this theorem only to prove that if both $G$ and $H$ contain a cycle, then $\RR(G,H)$ is infinite. Once this cyclic--cyclic case is available, the remaining cases follow from the classical forest and forest--cyclic literature described above.

We now recall the two 1981 conjectural statements that motivate the first part of the paper. They were proposed by Burr, Erd\H{o}s, Faudree, Rousseau, and Schelp in their paper on Ramsey-minimal graphs for matchings.

\begin{conjecture}[Burr--Erd\H{o}s--Faudree--Rousseau--Schelp \cite{BEFRS81}]\label{conj:add-matchings}
If $(G,H)$ is Ramsey-finite, then $(G\cup \ell K_2, H\cup mK_2)$ is Ramsey-finite for all integers $\ell,m\ge 0$.
\end{conjecture}

\begin{conjecture}[Burr--Erd\H{o}s--Faudree--Rousseau--Schelp \cite{BEFRS81}]\label{conj:one-sided}
The pair $(G,H)$ is Ramsey-infinite unless both $G$ and $H$ are odd stars or at least one of $G$ and $H$ contains a $K_2$-component.
\end{conjecture}

It is important that Conjecture~\ref{conj:one-sided} is a one-sided statement. The original paper explicitly notes that the converse fails: there are pairs with a $K_2$-component which are still Ramsey-infinite \cite[p.~161]{BEFRS81}. Thus the conjecture does not say that the two exceptional classes are exactly the finite classes; rather, it says that outside these classes infinitude must occur.

Burr's earlier survey suggested a stronger finite characterization. The two-alternative formulation is not correct as stated, because it misses one finite star-forest family. The corrected necessary-and-sufficient form is as follows.

\begin{theorem}\label{thm:corrected-burr}
Let $G$ and $H$ be finite graphs with at least one edge and without isolated vertices. Then $\RR(G,H)$ is finite if and only if, after possibly interchanging $G$ and $H$, one of the following alternatives holds:
\begin{enumerate}[label=\textup{(\roman*)}, leftmargin=2.8em]
    \item $H=tK_2$ for some integer $t\ge 1$;
    \item $G=S(r)\cup aK_2$ and $H=S(s)\cup bK_2$ for odd integers $r,s\ge 3$ and integers $a,b\ge 0$;
    \item there are integers $a,b\ge 0$, an integer $d\ge 2$, an odd integer $r\ge 3$, and integers
    \[
        m_1\ge m_2\ge \cdots \ge m_d\ge 2,
    \]
    such that
    \[
        G=\bigcup_{i=1}^d S(m_i)\cup aK_2,\qquad
        H=S(r)\cup bK_2,
    \]
    where $m_1$ is odd, $m_1\ge r+m_2-1$, and $b\ge N_0$. Here $N_0=N_0(m_1,\ldots,m_d,r,a)$ is the threshold integer supplied by Faudree's theorem for this fixed star-forest data.
\end{enumerate}
\end{theorem}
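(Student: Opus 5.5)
The proof splits into a sufficiency half and a necessity half, and the necessity half reduces by cases to known results once the cyclic--cyclic case is settled.

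\emph{Sufficiency.} Case (i) is the theorem of Burr, Erd\H{o}s, Faudree and Schelp \cite{BEFS78}: if one graph is a matching, the pair is Ramsey-finite. For case (ii), the single-odd-star pair $(S(r),S(s))$ is Ramsey-finite by \cite{BERS81}. Adjoining the matchings $aK_2$ and $bK_2$ preserves finiteness by Conjecture~\ref{conj:add-matchings}, which the paper proves. Case (iii) is exactly a family in Faudree's classification of Ramsey-finite forest pairs \cite{Fa91}. The threshold $N_0$ is defined as the one supplied there, so nothing beyond quoting the theorem is needed.

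\emph{Necessity.} Assume $\RR(G,H)$ is finite and split according to whether $G$ and $H$ contain cycles.
\begin{itemize}
\item \emph{Both contain a cycle.} Show Ramsey-infiniteness using the asymmetric random Ramsey threshold of Christoph, Martinsson, Steiner and Wigderson \cite{CMSW25}.
  \begin{itemize}
  \item Take $p$ slightly above the Kohayakawa--Kreuter threshold $n^{-1/m_2(G,H)}$ (assuming $m_2(G)\ge m_2(H)$). Then $G(n,p)\to(G,H)$ with high probability, so $G(n,p)$ contains some $F\in\RR(G,H)$.
  \item Since $\RR(G,H)$ is finite, one fixed graph $F$ appears with probability bounded away from zero along a subsequence.
  \item $F$ is Ramsey for $(G,H)$, so in particular $F\to(G,H)$ forces $m(F)\ge$ some quantity exceeding the density exponent. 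Compare this with the appearance threshold $n^{-1/m(F)}$ and take $p$ in the window between the two thresholds; such a window exists because the Ramsey threshold is strictly above the appearance threshold of every fixed Ramsey graph.
  \item More concretely, I would use the 0-statement: below the threshold, $G(n,p)\not\to(G,H)$ whp. A fixed $F$ with $F\to(G,H)$ would have to have $m(F)>m_2(G,H)$. Otherwise $F$ appears at densities where the 0-statement says no Ramsey subgraph exists, a contradiction. One also has to check that such an $F$ does not appear slightly above the threshold.
  \item The delicate part is that $F$ appears at $p=n^{-1/m(F)}$. Take $p$ just above the threshold but below $n^{-1/m(F)}$ for every $F$ in the finite family. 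This contradicts the 1-statement, since whp some member must be present.
  \end{itemize}
\item \emph{Exactly one is a forest.} Apply \L{}uczak \cite{Lu94}: if the forest is not a matching, the pair is infinite. If the forest is a matching, we are in (i).
\item \emph{Both are forests.} By Nešetřil--Rödl and \cite{BEFRS82}, a non-star component gives infiniteness unless the other graph is a matching. So both are star forests, and Faudree's classification \cite{Fa91} lists exactly the finite star-forest pairs. I would check that these reorganize into (i), (ii), (iii): separate the $K_2$-components as $aK_2$ and $bK_2$, and use \cite{BERS81} for the matching-free parts.
\end{itemize}

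\emph{Main obstacle.} I expect the cyclic--cyclic case to be hardest. The key point is that each fixed $F$ with $F\to(G,H)$ has $m(F)$ strictly larger than $m_2(G,H)$, so its appearance threshold lies strictly above the Ramsey threshold. I would try to obtain this strict inequality from the 0-statement itself via a stability/constant-factor argument, rather than proving it directly. The bookkeeping in reading Faudree's classification is routine by comparison.
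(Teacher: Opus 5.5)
Your proposal follows the paper's proof essentially step for step. Sufficiency comes from the Burr--Erd\H{o}s--Faudree--Schelp matching theorem, the odd-star theorem plus the matching-extension result, and Faudree's classification. Necessity uses \L{}uczak's theorem and Faudree's classification, and in the cyclic--cyclic case it is exactly the paper's argument: the $0$-statement at $p=cn^{-1/m_2(G,H)}$, with a second-moment check that also covers the equality case, forces $m(F)>m_2(G,H)$ for every fixed Ramsey graph $F$, and then a first-moment bound at $p=Cn^{-1/m_2(G,H)}$ contradicts the $1$-statement.

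One slip to fix: your early sentence claiming the Ramsey threshold lies strictly above the appearance threshold of every fixed Ramsey graph has the direction reversed. It is the appearance thresholds $n^{-1/m(F)}$ that lie above $n^{-1/m_2(G,H)}$, as you state correctly in your final paragraph.
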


The integer $N_0$ in Theorem~\ref{thm:corrected-burr}\textup{(iii)} is an existence threshold from Faudree's classification. It depends only on the fixed non-matching star components and on the matching size already present on the many-star side; it does not depend on the variable matching size $b$ on the one-star side. An explicit formula is not needed for the equivalence. In the concrete counterexample recalled in Section~5, the older star-forest theorem gives an explicit numerical bound.

The purpose of the present paper is to prove the two 1981 conjectures and to derive the corrected finite characterization above. More precisely, we prove the following theorem.

\begin{theorem}\label{thm:main-summary}
Let $G$ and $H$ be finite graphs with at least one edge and without isolated vertices.
\begin{enumerate}[label=\textup{(\roman*)}, leftmargin=2.6em]
    \item If neither $G$ nor $H$ has a $K_2$-component and $G,H$ are not both odd stars, then $\RR(G,H)$ is infinite.
    \item If $\RR(G,H)$ is finite, then $\RR(G\cup \ell K_2, H\cup mK_2)$ is finite for all $\ell,m\ge 0$.
    \item The complete finite classification is given by Theorem~\ref{thm:corrected-burr}.
\end{enumerate}
\end{theorem}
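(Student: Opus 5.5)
The plan is to prove Theorem~\ref{thm:main-summary} by first settling part (i) through a case analysis on whether $G$ and $H$ contain cycles, and then deriving (iii) and (ii) from (i) together with the classical finite families. For (i), assume neither graph has a $K_2$-component and the pair is not two odd stars. There are three cases.

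\textbf{Both graphs are forests.} Here we appeal to Faudree's classification of Ramsey-finite forest pairs \cite{Fa91}, backed by \cite{BEFRS82} when a non-star component is present and by \cite{BERS81} for star forests without $K_2$-components. These give infinitude except when both graphs are single odd stars. One has to check that every finite pair in Faudree's list has a $K_2$-component on some side. This holds for family (iii): the condition $b\ge N_0\ge 1$ forces $H$ to contain $K_2$-components, and we will verify that Faudree's threshold is positive.

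\textbf{Exactly one graph is a forest, say $G$.} If $G$ is not a matching, \L{}uczak's theorem \cite{Lu94} gives infinitude directly. If $G$ is a matching, it has $K_2$-components, which the hypothesis excludes.

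\textbf{Both graphs contain a cycle.} This is the main obstacle. The tool is the Kohayakawa--Kreuter threshold \cite{CMSW25}, applied to suitable subgraphs $G'\preceq G$ and $H'\preceq H$ that are strictly balanced with respect to the asymmetric density $m_2(G',H')$; one can take them to be 2-balanced cores containing cycles. Let $p$ be a constant factor above the threshold. Then $G(n,p)\to(G',H')$ with high probability. We then replace $(G',H')$ by $(G,H)$, either by padding or by arguing directly with $m_2(G,H)$, since the theorem is stated for general graphs with maximum 1-density conditions.

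The $0$-statement is then used to show that $G(n,p)$ contains no Ramsey graph of bounded size. Every subgraph on at most $C$ vertices has density below what any $F$ with $F\to(G,H)$ must have, namely $m(F)\ge m_2(G,H)$-type bounds; equivalently, small graphs appear only with density at most about $1/p$. Hence any $F\subseteq G(n,p)$ with $F\to(G,H)$ contains a Ramsey-minimal subgraph on more than $C$ vertices. Letting $C\to\infty$ yields infinitely many members of $\RR(G,H)$.

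The delicate point is verifying the local density claim. Every fixed graph $F_0$ with $F_0\to(G,H)$ has $m(F_0)>1/$(the exponent of $p$), so it does not appear in $G(n,p)$ with high probability. This follows from the $0$-statement applied to the ``$F_0$-free'' regime near the threshold, as in R\"odl--Ruci\'nski. When this route is awkward, we fall back on the deterministic block criteria of \cite{BFS85} for 2-connected components.

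For (iii), sufficiency of the three alternatives in Theorem~\ref{thm:corrected-burr} comes from \cite{BEFS78} for (i), from the odd-stars-with-matchings result of \cite{BEFRS81} for (ii), and from Faudree \cite{Fa91} for (iii). For necessity, suppose $(G,H)$ is finite. If either graph has a cycle and is not a matching, (i) combined with \L{}uczak reduces us to the case where the other graph is a matching. Otherwise both are forests, and Faudree's list applies; we match it term-by-term with alternatives (i)--(iii).

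Part (ii) then follows from (iii), since each alternative is closed under adding matchings. Adding $K_2$'s preserves (i) and (ii) trivially. Alternative (iii) is preserved because increasing $b$ keeps $b\ge N_0$. Increasing $a$ changes $N_0$, so that case must be checked directly with Faudree's theorem, which is valid for all $b$ large relative to the new $a$. This last check, if it fails, would need a direct compactness argument.
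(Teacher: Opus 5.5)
\textbf{Secondary issues in (i) and (iii).} Your case split for (i) matches the paper's. In the forest case you do not need Faudree's list or any check that $N_0\ge 1$: once neither graph has a $K_2$-component, \cite{BERS81} and \cite{BEFRS82} settle it directly.

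The core of your cyclic--cyclic argument is also the paper's. The $1$-statement makes $G_{n,p}$ Ramsey at $p=Cn^{-1/\mtwo(G,H)}$. The $0$-statement forces every fixed $F_0\to(G,H)$ to have maximum density strictly above $\mtwo(G,H)$, so such $F_0$ is absent with high probability. Three points need cleaning up:
\begin{enumerate}
\item Apply the threshold theorem to $(G,H)$ itself. The detour through cores $G'\preceq G$, $H'\preceq H$ runs the wrong way: $G_{n,p}\to(G',H')$ does not give $G_{n,p}\to(G,H)$.
\item The fallback to \cite{BFS85} does not settle arbitrary pairs of graphs with cycles, so it is not a proof.
\item ``Follows from the $0$-statement'' needs the second-moment bound $\PP(F_0\preceq G_{n,p})\ge\eta>0$ at $p=cn^{-1/\mtwo(G,H)}$. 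This is what handles the boundary case of density exactly $\mtwo(G,H)$.
\end{enumerate}
In the necessity half of (iii) you cite (i) for two cyclic graphs, but (i) assumes no $K_2$-components. There you need cyclic--cyclic infinitude for arbitrary $G,H$ containing cycles. Your argument never uses the $K_2$ hypothesis, so state it separately, as the paper does in Theorem~\ref{thm:cyclic-cyclic}.

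\textbf{The genuine gap is in (ii).} Deriving (ii) from the classification breaks down exactly where you suspect. In alternative (iii), adjoining $\ell K_2$ to the many-star graph changes $a$ to $a+\ell$. Taking $m=0$, the classification gives no reason why the given $b$ satisfies $b\ge N_0(m_1,\dots,m_d,r,a+\ell)$. Appealing to Faudree's theorem ``for all $b$ large relative to the new $a$'' does not help, because $b$ is fixed. The ``direct compactness argument'' is never supplied.

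The missing ingredient is Faudree's remark, Lemma~\ref{lem:faudree-remark}: for forests, finiteness of $\RR(F_1,F_2)$ implies finiteness of $\RR(F_1,F_2\cup K_2)$. The paper proves (ii) without using the classification at all:
\begin{enumerate}
\item If one graph is a matching, \cite{BEFS78} applies.
\item Otherwise the cyclic--cyclic theorem and \L{}uczak's theorem force both graphs to be forests (Corollary~\ref{cor:finite-forest}).
\item Iterating the remark on each side, and switching sides via $\RR(G,H)=\RR(H,G)$, gives finiteness of $\RR(G\cup\ell K_2,H\cup mK_2)$.
\end{enumerate}
The monotonicity of $N_0$ in $a$ that your route needs is a consequence of this remark. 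The classification statement alone does not provide it.
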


The first assertion is exactly Conjecture~\ref{conj:one-sided}. The second assertion is Conjecture~\ref{conj:add-matchings}. The third assertion is the corrected necessary-and-sufficient version of Burr's stronger survey formulation; the extra alternative in Theorem~\ref{thm:corrected-burr}\textup{(iii)} is precisely the finite star-forest family omitted from the original two-alternative statement.

The paper is organized as follows. Section~2 collects the classical results used in the proof. Section~3 proves the cyclic--cyclic case using the asymmetric random Ramsey threshold theorem; see Theorem \ref{thm:cyclic-cyclic}. Section~4 derives the two 1981 conjectures. Section~5 proves the corrected Burr--Faudree characterization and records Faudree's complete classification of Ramsey-finite forest pairs.

\section{Classical input}

We shall use the following known results.

\begin{theorem}[Burr--Erd\H{o}s--Faudree--Schelp \cite{BEFS78}]\label{thm:matching}
If one of $G$ and $H$ is a matching, then $\RR(G,H)$ is finite.
\end{theorem}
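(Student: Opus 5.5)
The strategy is to bound the size of every Ramsey-minimal graph for $(G,tK_2)$ by a constant depending only on $G$ and $t$. This suffices, since there are only finitely many graphs (up to isomorphism) on a bounded number of vertices without isolated vertices. By symmetry of the Ramsey relation, it is enough to treat $H=tK_2$, and we argue by induction on $t$.

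For $t=1$, $F\to(G,K_2)$ means exactly that $G\preceq F$, because colouring all of $F$ red must give a red $G$. Hence the minimal graphs are precisely the copies of $G$, and $\RR(G,K_2)=\{G\}$.

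For general $t$, fix $F\in\RR(G,tK_2)$. The key structural claim is that $F$ contains a vertex set $S$ of bounded size with the following property: no colouring of $F$ avoiding a red $G$ has blue matching number $t-1$ while the blue edges are confined to a small region. More concretely, we use the Tutte--Berge / König-type fact that a graph with no matching of size $t$ has a vertex cover-like obstruction. Every blue graph avoiding $tK_2$ has a set $T$ of at most $2t-2$ vertices meeting all of its blue edges, namely the vertices of a maximum matching.

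So $F\to(G,tK_2)$ is equivalent to the following condition: for every set $T\subseteq V(F)$ with $|T|\le 2t-2$ and every blue subgraph $B$ spanned by edges incident to $T$ with $\nu(B)<t$, the red graph $F-E(B)$ contains $G$. In particular, it suffices to require that for every set $T$ of at most $2t-2$ vertices, $F-T$ contains $G$, together with the finitely many configurations near $T$.

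Minimality of $F$ then gives the bound. For each edge $e$ there is a bad colouring $c_e$ of $F-e$, with a witness set $T_e$ of size at most $2t-2$. Moreover, $e$ must lie in every red copy of $G$ that appears in $c_e$ extended by colouring $e$ red. Thus every edge lies in a copy of $G$ inside $F-T_e$ that is destroyed by removing $e$.

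Next, consider a maximal family of vertex-disjoint copies of $G$ in $F$. If this family had more than $2t-1$ members, then deleting any $2t-2$ vertices would leave a copy of $G$ untouched. A standard "sunflower/hitting set" argument then shows that a subgraph consisting of $2t-1$ disjoint copies of $G$, plus bounded attachments, already arrows $(G,tK_2)$, contradicting minimality unless $F$ is bounded. Carrying this out requires care, because the blue edges are not forced to lie inside $T$; they may lie anywhere, with $T$ merely covering them. The actual argument is a hitting-set / Erdős--Pósa style compactness: the hypergraph of copies of $G$ in $F$ has $\tau>$ the relevant cover bound. We then find a sub-hypergraph with boundedly many edges (copies) having the same covering property, using the bounded uniformity $\vtx(G)$ and the bounded cover size $2t-2$. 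This is a classical kernel lemma: an $r$-uniform hypergraph with $\tau>k$ contains a subhypergraph with at most $r^{k+1}$ edges and $\tau>k$, obtained by a branching argument. The union of those copies of $G$ is a subgraph $F'$ with at most $\vtx(G)\cdot \vtx(G)^{2t-1}$ vertices.

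We must then check that $F'\to(G,tK_2)$. This is the main obstacle, because the covering formulation is only sufficient, not exactly equivalent: a blue graph covered by $T$ might still contain $tK_2$. I would handle this by refining the hypergraph. The blue edges form a graph with no $tK_2$, so by the Erdős--Gallai structure (a set $T$ of fewer than $t$ vertices plus components of bounded size) they are described by boundedly many parameters. We then apply the branching kernel argument to the family of "blue configurations" rather than to vertex sets alone. The conclusion is that minimal $F$ equals $F'$, whose size is bounded, which completes the argument.
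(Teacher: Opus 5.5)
The paper gives no proof of this statement; it quotes it from \cite{BEFS78}. Your argument therefore has to stand on its own, and at the decisive step it does not.

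The kernel lemma is applied to the wrong hypergraph. The condition that $G\preceq F-T$ for every $T$ with $|T|\le 2t-2$ is sufficient for $F\to(G,tK_2)$, but it is not necessary, and Ramsey-minimal graphs typically violate it. Take $G$ connected and $t\ge 2$. Then $tG$ belongs to $\RR(G,tK_2)$, since a colouring with no red $G$ puts a blue edge in every component. Yet the only copies of $G$ in $tG$ are its $t$ components, and these are met by $t\le 2t-2$ vertices. So the premise ``$\tau>2t-2$'' fails for exactly the graphs you must bound. Your bound on $\vtx(F')$ then has no basis, and the whole proof rests on your last paragraph, which only gestures at a repair. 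Two smaller points: the announced induction on $t$ is never used, and the red copy of $G$ through $e$ need not lie in $F-T_e$, since $T_e$ may also carry red edges.

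The missing idea is how to turn a blue graph $B$ with $\nu(B)\le t-1$ into a \emph{bounded} obstruction. A small vertex cover $T$ of $B$ does not do this. The blue edges at $T$ are an arbitrary, unboundedly large subset of the edges of $F$ at $T$, and recolouring all of them blue may create a blue $tK_2$. Knowing that $B-T$ has components of bounded size does not help either.

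What works is the following.
\begin{enumerate}
\item Let $A$ be the set of vertices of blue degree at least $2t-1$ (or the Gallai--Edmonds set). A greedy matching argument gives $|A|+\nu(B-A)\le t-1$. Also $\edg(B-A)\le c(t)$, because $B-A$ has bounded maximum degree and bounded matching number.
\item Call a set $X=A\cup E(B_1)\subseteq V(F)\cup E(F)$ \emph{admissible} if $B_1\subseteq F-A$, $|A|+\nu(B_1)\le t-1$ and $\edg(B_1)\le c(t)$. Regard each copy of $G$ as the set of its vertices and edges. By step 1, any colouring of $F$ with no blue $tK_2$ and no red $G$ yields an admissible $X$ meeting every copy of $G$.
\item Conversely, suppose an admissible $X$ meets every copy. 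Colour blue \emph{all} edges of $F$ at $A$, together with $E(B_1)$. This blue graph has matching number at most $|A|+\nu(B_1)\le t-1$. Since $G$ has no isolated vertices, there is no red $G$.
\end{enumerate}
Hence $F\to(G,tK_2)$ if and only if no admissible set meets every copy of $G$.

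Admissible sets have bounded size, are defined independently of the host graph, and form a family closed under subsets. This is exactly what the branching argument needs: extend a partial transversal only by elements that keep it admissible. The argument yields boundedly many copies of $G$ whose union $F'$ has no admissible transversal. Then $F'\to(G,tK_2)$, and minimality forces $F=F'$. Without this reduction, applying ``the branching kernel argument to the family of blue configurations'' has no content.
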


\begin{theorem}[Burr--Erd\H{o}s--Faudree--Rousseau--Schelp \cite{BERS81}]\label{thm:star-forests}
Let $G$ and $H$ be star forests with no $K_2$-components. Then $\RR(G,H)$ is finite if and only if both $G$ and $H$ are odd stars.
\end{theorem}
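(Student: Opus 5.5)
The proof would split into the finiteness direction for two odd stars and an infinitude argument for every other star forest without $K_2$-components. Both directions rest on one dichotomy. Either an arrowing graph contains a high-degree vertex, and the minimal witness is then a star, or its maximum degree is bounded, and then I use cycles of large length.

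\emph{Finiteness for odd stars.} Let $G=S(r)$ and $H=S(s)$ with $r,s$ odd, and put $d=r+s-2$, which is even. I would show that $F\to(S(r),S(s))$ holds if and only if $\Delta(F)\ge r+s-1$.
\begin{itemize}
\item Sufficiency is pigeonhole: a vertex of degree at least $r+s-1$ has at least $r$ red or at least $s$ blue edges.
\item For necessity, take $F$ with $\Delta(F)\le d$ and embed it in a $d$-regular graph $F^*$, which is a standard padding construction.
\item By Petersen's theorem, $F^*$ decomposes into $d/2$ edge-disjoint $2$-factors.
\item Colour $(r-1)/2$ of these $2$-factors red and the remaining $(s-1)/2$ blue. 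Every vertex then has red degree $r-1$ and blue degree $s-1$, so there is neither a red $S(r)$ nor a blue $S(s)$.
\end{itemize}
Hence $\RR(S(r),S(s))=\{S(r+s-1)\}$, which is finite.

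\emph{Infinitude in the remaining cases.} Suppose now that $(G,H)$ are star forests without $K_2$-components that are not both odd stars. The strategy has three steps.
\begin{enumerate}
\item Forests are harmless. I would show that every forest $T$ with $T\not\succeq$ the relevant "trivial" witnesses can be coloured avoiding both targets. For bounded maximum degree, root each tree and colour greedily from the root downward, keeping the red degree below $r$ and the blue degree below $s$ at every vertex.
\item Every minimal $F$ for $(G,H)$ that is not one of finitely many trivial witnesses must therefore have bounded degree parameters and must contain a cycle. The trivial witnesses are those built from high-degree stars.
\item For every $g$, I would construct a graph $F_g\to(G,H)$ with the same degree restrictions and girth at least $g$. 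Any minimal subgraph of $F_g$ has girth at least $g$, still satisfies the degree restrictions, and is not a forest by step 2. It therefore contains a cycle of length at least $g$. Letting $g\to\infty$ yields infinitely many nonisomorphic members of $\RR(G,H)$.
\end{enumerate}

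\emph{Constructing $F_g$, which I expect to be the main obstacle.} For two single stars with $r$ even, $d=r+s-2$ is odd. The idea is then to take a $d$-regular graph of girth at least $g$ with no $(r-1)$-factor. In such a graph, any colouring avoiding both stars would need red degree exactly $r-1$ at every vertex, which is impossible. The absence of the factor would be forced by a Tutte-type parity obstruction, for instance many odd components hanging off a cut, and the gadgets would be built from high-girth regular graphs, which exist by Erd\H{o}s--Sachs.

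For genuine star forests with several components, the degree bound in step 2 is replaced by a bound on the number of high-degree vertices. I would then take many disjoint copies of the single-star gadget for the largest components, joined so as to force the smaller stars. This uses the pigeonhole principle on disjoint high-degree vertices, following the Burr--Erd\H{o}s--Faudree--Rousseau--Schelp approach. The delicate part is keeping the girth large while guaranteeing the arrow property simultaneously for all component stars.
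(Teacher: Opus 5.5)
\paragraph{Where you stand.} The paper does not prove this theorem; it imports it from \cite{BERS81}, so your proposal has to carry the whole argument. Your single-star part is sound. For odd $r,s$ the Petersen $2$-factorization gives $\RR(S(r),S(s))=\{S(r+s-1)\}$. For single stars not both odd, the large-girth parity construction works, with one slip. If $r$ and $s$ are both even, then $d=r+s-2$ is even, not odd. An even-regular graph has no bridges, so your ``odd components hanging off a cut'' cannot be used. The correct obstruction is a $d$-regular graph of odd order and large girth: it has no $(r-1)$-factor because $r-1$ is odd. Your bridge-to-odd-blobs obstruction is what handles the case where exactly one of $r,s$ is even.

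\paragraph{The gap: several components.} The real gap is the case where $G$ or $H$ has several components, and your sketch does not supply it. The proposed mechanism, copies of ``the single-star gadget for the largest components'', does not exist when the relevant components are odd stars. By your own finiteness argument, every graph arrowing $(S(m),S(n))$ with $m,n$ odd contains $S(m+n-1)$, so there is no bounded-degree, large-girth gadget to copy.

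\paragraph{A concrete case your sketch misses.} Take $G=S(3)\cup S(3)$ and $H=S(3)$, which the theorem asserts is Ramsey-infinite.
\begin{itemize}
\item In any colouring with no blue $S(3)$, every vertex of degree at least $5$ has red degree at least $3$. Hence $2S(5)\to(G,H)$.
\item Consequently, in every other minimal graph, all vertices of degree at least $5$ lie pairwise within distance $2$. Otherwise two vertex-disjoint $S(5)$'s would appear.
\item So vertices away from this cluster have degree at most $4$.
\item A graph of maximum degree at most $4$ always admits a colouring with red and blue degrees at most $2$, by your own Petersen argument.
\end{itemize}
Therefore the second, disjoint red $S(3)$ can only be forced through an interaction between the cluster's forced red star and a parity or factor obstruction in the degree-$\le 4$ periphery. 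Designing that interaction is the missing idea, and the ``delicate part'' you flag is in fact the entire proof in this case.

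\paragraph{The forest step also needs reformulating.} Your greedy rooting argument, with per-vertex thresholds $r$ and $s$, only makes sense for single stars. For $(2S(3),S(3))$ you would need a statement such as: forests in which all vertices of degree at least $5$ are pairwise within distance $2$ do not arrow. You would also need to check that the core-plus-tree-like subgraphs of your construction do not arrow. None of this is formulated.

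\paragraph{What your approach does cover.} Cases where some pair of components involves an even star can plausibly be handled along your lines. Examples are $G=S(4)\cup S(3)$, $H=S(3)$, or $G=S(5)\cup S(2)$, $H=S(3)$: take a single-star gadget for the even pair and add a disjoint $S(m+n-1)$ for the other component. The all-odd multi-component cases cannot be handled this way.
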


\begin{theorem}[Burr--Erd\H{o}s--Faudree--Rousseau--Schelp \cite{BEFRS82}]\label{thm:forests-nonstar}
If $G$ and $H$ are forests, neither is a matching, and at least one of them has a component that is not a star, then $\RR(G,H)$ is infinite.
\end{theorem}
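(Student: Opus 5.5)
The plan is to argue by the signal-sender method of Burr, Erd\H{o}s and Lov\'asz \cite{BEL76}, as adapted to forests, rather than by a direct girth argument. The girth argument cannot work on its own. Forests can themselves arrow $(G,H)$; for instance the star $S(3)\to(P_3,P_3)$, since some colour class of $S(3)$ contains two edges and these form a $P_3$. So a Ramsey-minimal subgraph of a high-girth graph need not contain a long cycle.

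Instead, for every $n$ I will build a graph $F_n\to(G,H)$ such that every Ramsey-minimal subgraph of $F_n$ has at least $n$ vertices. This forces $\RR(G,H)$ to be infinite.

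The first step is to construct, for the given pair, \emph{negative} and \emph{positive signal senders} of arbitrarily large girth. A negative sender is a graph $S$ with two distinguished edges $e,f$ at distance at least $n$ such that:
\begin{itemize}
  \item $S\not\to(G,H)$;
  \item in every $(G,H)$-good colouring of $S$, meaning one with no red $G$ and no blue $H$, the edges $e$ and $f$ receive different colours.
\end{itemize}
A positive sender is defined in the same way, except that $e$ and $f$ must receive the same colour.

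To build the senders I would proceed as follows.
\begin{enumerate}
  \item Take a non-star component $T$ of $G$, say. Then $T$ contains a $P_4$, and this is exactly what lets a good colouring "transmit" information along a path of copies of $T$. A star does not transmit, because its edges all meet at the centre.
  \item The hypothesis that neither graph is a matching guarantees that each colour class is genuinely constrained. This supplies the forbidding gadgets, namely graphs all of whose good colourings make a prescribed edge red (respectively blue).
  \item Using a Ne\v{s}et\v{r}il--R\"odl type high-girth Ramsey theorem for forests \cite{NR78a,NR78b}, I would realise these gadgets with girth larger than $n$. I would then chain them into senders in which $e$ and $f$ are far apart.
\end{enumerate}

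The second step assembles the senders. Take a cycle of length $n$ (odd, or with an odd number of negative senders), and replace each edge by a sender, identifying consecutive distinguished edges. The parity of the colour constraints then makes every colouring bad, so $F_n\to(G,H)$. On the other hand, deleting any single sender, or any edge deep inside the long cycle structure, produces a graph that admits a good colouring, because each sender is individually good-colourable with both prescribed patterns.

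Hence any Ramsey-minimal $F'\subseteq F_n$ must meet every sender in the chain. If it missed one, it would sit inside a good-colourable graph, since the senders have large girth and the local pieces are not Ramsey. Therefore $F'$ has order at least $n$. Letting $n\to\infty$ gives infinitely many non-isomorphic minimal graphs.

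The main obstacle is the first step: building a sender whose good colourings exist at all, while still propagating the colour rigidly. This must be done only with the weak hypothesis of one non-star component and no matching. My first attempt would be to reduce to the case where $G$ contains a $P_4$ component-wise. I would then use the forbidding gadget from the non-matching graph $H$, which contains a $P_3$, to pin colours. The delicate point is to verify that gluing gadgets along edges creates no new red $G$ or blue $H$ straddling several gadgets. This is where the large girth and the forest structure of $G$ and $H$ must be used: every copy of a tree in a high-girth union must lie essentially inside tree-like neighbourhoods that can be controlled gadget by gadget.
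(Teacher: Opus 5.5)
\emph{Note.} The paper does not prove this statement; it imports it from \cite{BEFRS82} as classical input. Your sketch therefore has to stand on its own, and it has a genuine gap.

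\textbf{The core construction is missing.} The step you flag as the main obstacle, the existence of senders, is the whole content of the theorem, and it is left unproved. A high-girth Ramsey theorem only supplies graphs that arrow $(G,H)$. Turning them into gadgets that do not arrow, yet force a colour relation between far-apart edges, and that admit good colourings with each pattern, is the nontrivial part. For this you offer only heuristics (``a $P_4$ transmits'', ``a non-matching gives determiners''). In particular, you never pin down where the hypothesis that neither graph is a matching is used. Some precise use is unavoidable: $(G,tK_2)$ is Ramsey-finite for every $G$ by Theorem~\ref{thm:matching}, so any valid construction must break down when $H=tK_2$.

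\textbf{The proposed fix for gluing fails for forests.} You suggest that large girth keeps every copy of a tree inside controllable local pieces. But non-interference in the sender method \cite{BEL76} rests on connectivity of the targets: a copy of a $3$-connected graph cannot cross two pieces that share only one edge. In a forest every edge is a bridge, and girth constrains only cycles.
\begin{itemize}
  \item \emph{Connected targets.} Take $G=P_4$, $H=P_3$ and girth at least $4$. A good colouring then has blue a matching and red a star forest. Suppose two senders share $e=uv$ coloured red, with a red edge $ua$ in one sender and a red edge $vb$ in the other. Then $a\,u\,v\,b$ is a red $P_4$ contained in neither sender, and nothing in your definition of a sender excludes this.
  \item \emph{Disconnected targets.} These are worse, for instance $H=S(2)\cup mK_2$, or $G$ with $K_2$-components. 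The components of a red $G$ or a blue $H$ can lie in different, arbitrarily distant senders, so no locality argument applies.
\end{itemize}
The second failure is exactly what must defeat the construction when $H=tK_2$. A chain of $n$ negative senders alternates colours on its distinguished edges in every good colouring. That gives about $n/2$ pairwise disjoint blue edges, hence a blue $tK_2$. So removing one sender never restores a good colouring.

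\textbf{What is needed.} You must construct senders that also control the colours at the ends of their distinguished edges. You must also show how the non-matching hypothesis rules out this global interference. Until then, your key claim is unsupported: that $F_n$ minus the interior of any sender is good-colourable. So is its consequence, that every minimal $F'\subseteq F_n$ has order at least $n$.
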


\begin{theorem}[{\L}uczak \cite{Lu94}]\label{thm:luczak}
If $F$ is a forest which is not a matching and $G$ contains a cycle, then $\RR(G,F)$ is infinite.
\end{theorem}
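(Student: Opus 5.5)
The statement just above is Theorem~\ref{thm:luczak}, which is Łuczak's theorem; the paper cites it rather than proving it. The plan below is how I would reprove it. I have not checked that every step goes through, and the local colouring step in particular is uncertain.

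\textbf{Reduction.} The plan uses a standard reduction criterion, which I am assuming rather than proving here: $\RR(G,F)$ is infinite as soon as, for every $N$, there is a graph $\Gamma_N$ such that
\begin{itemize}
\item $\Gamma_N\to(G,F)$, and
\item every subgraph of $\Gamma_N$ on at most $N$ vertices fails to arrow $(G,F)$.
\end{itemize}
Given this, any Ramsey-minimal subgraph of $\Gamma_N$ has more than $N$ vertices. So $\RR(G,F)$ contains members of unbounded order and is infinite. The work therefore splits into a global arrowing statement and a local non-arrowing statement.

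\textbf{Global arrowing.} The key observation is that $F$ is a forest, so every $F$-free graph is $(\vtx(F)-1)$-degenerate. Hence in any colouring with no blue $F$, the blue graph has at most $\vtx(F)\,n$ edges.

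Let $\Gamma$ be the random graph $G(n,p)$ with $p=Cn^{-1/m_2(G)}$. Since $G$ contains a cycle, $m_2(G)>1$. Consequently $\edg(\Gamma)\approx pn^2/2$ is superlinear in $n$ with high probability. I would then show, by a Janson-type count, that w.h.p.\ no set of $O(n)$ edges meets all copies of $G$ in $\Gamma$. The count compares two quantities:
\begin{itemize}
\item the number of copies of $G$, which is of order $n^{\vtx(G)}p^{\edg(G)}$;
\item the maximum number of copies through one edge, whose expectation is of order $n^{\vtx(G)-2}p^{\edg(G)-1}$.
\end{itemize}
This gives $\Gamma\to(G,F)$. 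Only linearity of the blue graph is used here, not its precise structure.

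\textbf{Local non-arrowing.} With the same $p$, a first-moment argument shows that w.h.p.\ every subgraph $K\subseteq\Gamma$ on at most $N$ vertices satisfies $\edg(K')/\vtx(K')\le m_2(G)$ for all $K'\subseteq K$, with slack. Choosing $p$ slightly below the threshold, say $n^{-1/m_2(G)-\varepsilon}$, makes small subgraphs strictly sparser than $G$'s densest part. I would then need to check that the global step still works at this $p$, since the blue graph is only $O(n)$ while $\edg(\Gamma)$ stays superlinear.

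It then remains to colour each such sparse $K$ with no red $G$ and no blue $F$. The most natural attempt is to exploit that $F$ is not a matching, so it contains $S(2)$ together with other components. The colouring I would try is: make the blue graph a matching $M$ chosen so that every copy of $G$ in $K$ uses an edge of $M$, and colour the rest red. A blue matching contains no $S(2)$, hence no $F$. The existence of such an $M$ should follow from the sparsity of $K$: when the density is strictly below that of the densest subgraph of $G$, the copies of $G$ in $K$ form a loosely overlapping, almost tree-like configuration, and a greedy choice along an ordering of the cycle-containing blocks works.

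\textbf{Main obstacle.} The hardest step is this local colouring. Matching the random-graph density exactly to what the global step needs, while keeping small subgraphs sparse enough to be colourable, is delicate, and the two steps may require different thresholds. If the plain random graph fails, I would fall back on a Nešetřil--Rödl style amalgamation of copies of $G$ along a high-girth hypergraph. In that construction every subgraph on at most $N$ vertices is a tree-like gluing of copies of $G$, which can be coloured explicitly by one blue edge per copy in a forest order. Arrowing would then be forced by the same degeneracy-counting argument.
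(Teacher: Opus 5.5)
\textbf{The primary random-graph route fails at the local colouring step.} Since the paper only cites this result from {\L}uczak, your plan has to stand on its own. Below the $\mtwo$-threshold you only learn that small subgraphs of $\Gamma$ have density at most about $\mtwo(G)$, and this can be much larger than the density of $G$. (Small subgraphs can never be sparser than $G$'s densest part anyway, because the global step needs $G\preceq\Gamma$.) At such densities, copies of $G$ can overlap heavily rather than forming a tree-like configuration.

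Concretely, take $G=C_4$ and $F=S(2)$. Since $\mtwo(C_4)=3/2$, your choice $p=n^{-2/3-\varepsilon}$ with $\varepsilon<1/12$ (and a fortiori $p=Cn^{-2/3}$) satisfies $p\gg n^{-3/4}$. So the balanced graph $K_{2,4}$, of density $8/6=4/3$, appears in $\Gamma$ with high probability. But $K_{2,4}\to(C_4,S(2))$: a blue matching touches at most two of the four vertices on the larger side, and the two untouched ones together with the smaller side span a red $C_4$. Thus $\Gamma$ has a $6$-vertex subgraph arrowing $(G,F)$, and the reduction fails for every $N\ge 6$. Lowering $p$ gives no comfortable window either. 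Your global step needs more copies of $C_4$ than the $O(n)$ possible blue edges, i.e.\ $p=\Omega(n^{-3/4})$, which is exactly where $K_{2,4}$ starts to appear.

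\textbf{The fallback is the right kind of construction but needs a repair.} As stated, ``one blue edge per placed copy'' works when $G$ is $2$-connected, but not in general. Otherwise a red copy of $G$ can be assembled from pieces of several placed copies: for $G=C_3\cup C_3$, each placed copy keeps a red triangle, and two of them form a red $G$. The following repair works.
\begin{enumerate}
\item Let $B$ be a block of $G$ with the maximum number of edges; it is $2$-connected because $G$ has a cycle.
\item Take a linear $\vtx(G)$-uniform hypergraph of girth larger than $N$ with more than $\vtx(F)-2$ times as many edges as vertices (Erd\H{o}s--Hajnal, or random construction plus deletion). Place a copy of $G$ on each hyperedge.
\item Global step: every edge of $\Gamma$ lies in exactly one placed copy, and an $F$-free graph on $n$ vertices has at most $(\vtx(F)-2)n$ edges. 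Hence some placed copy is entirely red, giving $\Gamma\to(G,F)$.
\item Local step, structure: for $K\subseteq\Gamma$ with at most $N$ vertices, the girth forces every cycle of $K$ into a single placed copy $Q$. So every copy of $B$ in $K$ lies in one $Q$ and, by edge-maximality, is a block of $Q$.
\item Local step, colouring: rooting the Berge-acyclic trace hypergraph gives each $Q$ a parent vertex $x_Q$, with the sets $(V(Q)\cap V(K))\setminus\{x_Q\}$ pairwise disjoint. Root the block--cut tree of $Q$ at $x_Q$. In each block isomorphic to $B$, colour blue one edge avoiding that block's parent vertex.
\end{enumerate}
The blue edges form a matching, so there is no blue $S(2)\preceq F$. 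Every copy of $B$ in $K$ receives a blue edge, so there is no red $G$.
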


\begin{lemma}\label{lem:faudree-remark}
Let $F_1$ and $F_2$ be forests. If $\RR(F_1,F_2)$ is finite, then $\RR(F_1,F_2\cup K_2)$ is finite. This is the remark on p.~123 of \cite{Fa91}.
\end{lemma}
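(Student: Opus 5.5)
The plan is to bound the order of every graph in $\RR(F_1,F_2\cup K_2)$ by a constant depending only on $F_1$, $F_2$ and the finite family $\RR(F_1,F_2)$. Since there are only finitely many graphs of bounded order, this gives finiteness. Put $p=\vtx(F_2)$ and let $c$ be the maximum order of a member of $\RR(F_1,F_2)$.

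The first step is a structural description of the bad colorings. Suppose $F\to(F_1,F_2)$ and take a red--blue coloring of $F$ with no red $F_1$ and no blue $F_2\cup K_2$. Since $F\to(F_1,F_2)$, this coloring contains a blue copy $C$ of $F_2$. Every blue edge must meet $V(C)$, for otherwise it would complete a blue $F_2\cup K_2$. Hence all blue edges are incident with a vertex set $T=V(C)$ with $|T|\le p$.

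This yields the following criterion. We have $F\not\to(F_1,F_2\cup K_2)$ if and only if either $F\not\to(F_1,F_2)$, or there are $T\subseteq V(F)$ with $|T|\le p$ and a coloring of $F$ in which every edge avoiding $T$ is red, there is no red $F_1$, and every blue copy of $F_2$ meets all blue edges. In the second case the graph $F-T$ (delete the vertices of $T$) contains no copy of $F_1$. So $F\to(F_1,F_2\cup K_2)$ is essentially a statement that, for each small set $T$, either $F-T\supseteq F_1$ or the edges at $T$ cannot be colored suitably.

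The second step extracts a bounded certificate for this property from a Ramsey-minimal $F$. For each of the at most $\binom{\vtx(F)}{\le p}$ candidate sets $T$ we want a bounded witness. There are two kinds.
\begin{enumerate}[label=\textup{(\alph*)}]
\item If $F-T$ contains $F_1$, the witness is a copy of $F_1$ in $F-T$.
\item Otherwise, the witness is a bounded subgraph near $T$ that rules out every admissible coloring. Here we use that $F\to(F_1,F_2)$ is itself witnessed by a subgraph from $\RR(F_1,F_2)$ of order at most $c$.
\end{enumerate}
Rather than working with all sets $T$ at once, I would argue iteratively, as in the classical matching argument of Theorem~\ref{thm:matching}.
\begin{enumerate}[label=\textup{(\arabic*)}]
\item Fix one member $M\preceq F$ of $\RR(F_1,F_2)$.
\item Consider $F'=F-V(M)$. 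Either $F'\supseteq F_1$, or every copy of $F_1$ meets the bounded set $V(M)$.
\item Use the forest structure of $F_1$ (bounded order, and copies of $F_1$ through a fixed vertex can be pruned) to show that boundedly many copies of $F_1$ and members of $\RR(F_1,F_2)$, suitably chosen, already force $F_1$ or $F_2\cup K_2$.
\end{enumerate}
By minimality, $F$ equals the union of these boundedly many bounded subgraphs, so $\vtx(F)$ is bounded.

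The main obstacle is the second step. The difficulty is that the relevant sets $T$ range over all of $F$, and vertices of high degree could a priori require unboundedly many witnesses. I would first try to show that a minimal $F$ has bounded maximum degree. If some vertex $x$ had huge degree, the many edges at $x$ beyond those used in a bounded family of witnesses could be recolored freely without creating a red $F_1$ (a forest of fixed size) or a new blue $F_2\cup K_2$ avoiding $x$. That would contradict minimality. Once the degree is bounded, the relevant $T$'s lie in bounded neighborhoods of the chosen witnesses, and the union argument closes. This is the point where Faudree's remark in \cite{Fa91} does the work, and if the degree bound resists a direct proof, I would fall back on verifying the lemma against Faudree's explicit classification of Ramsey-finite forest pairs.
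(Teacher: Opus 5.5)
\textbf{Verdict: genuine gap.} The paper gives no argument for this lemma; it imports it as Faudree's remark on p.~123 of \cite{Fa91}. Your self-contained attempt breaks down exactly where you locate the ``main obstacle''.

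Step~1 is correct. In a coloring of an $F\to(F_1,F_2)$ that avoids red $F_1$ and blue $F_2\cup K_2$, every blue edge meets the vertex set $T$ of some blue $F_2$, and $F-T$ is $F_1$-free.

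Step~2, however, is not a step toward the lemma but a restatement of it. Saying that every $F\to(F_1,F_2\cup K_2)$ contains an arrowing subgraph of order bounded in terms of $F_1,F_2$ is equivalent to finiteness of $\RR(F_1,F_2\cup K_2)$, and nothing in the proposal proves it:
\begin{itemize}
\item Finiteness of $\RR(F_1,F_2)$ only gives you one bounded $M\preceq F$ with $M\to(F_1,F_2)$. The colorings you must exclude are indexed by sets $T$ that may lie anywhere in $F$. You give no mechanism that bounds the number of witnesses or confines the relevant $T$ to a neighbourhood of $M$; item~(3), with its ``suitably chosen'', is the whole claim.
\item The degree-bound heuristic does not fill this. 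Minimality supplies, for each edge $e$, a good coloring of $F-e$. To contradict it you must show $F-e\to(F_1,F_2\cup K_2)$ for some $e$ at the high-degree vertex $x$, i.e.\ rule out every coloring of $F-e$. Recoloring arguments can only produce colorings, so they cannot do this.
\item Declaring the surplus edges at $x$ redundant presupposes the bounded witness family you are trying to build, so the argument is circular.
\item Those edges also cannot be recolored ``freely''. Making them blue can create a blue $F_2\cup K_2$ through $x$. Making them red gives $x$ a large red star, which may complete a red $F_1$.
\end{itemize}

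Your fallback is closer to what the paper does, but deriving the lemma from the classification in Theorem~\ref{cor:faudree} is not automatic either. Alternatives (i) and (ii) are stable under adding $K_2$ to $F_2$. So is (iii) when $F_2$ is the one-star forest $S(r)\cup nK_2$, because $n+1\ge N_0$. The problem is the reversed orientation, where $F_1=S(r)\cup nK_2$ and $F_2$ is the many-star forest $\bigcup_i S(m_i)\cup mK_2$:
\begin{itemize}
\item Adding $K_2$ replaces $m$ by $m+1$.
\item No other alternative can apply to the new pair, so its finiteness amounts to $n\ge N_0(m_1,\ldots,m_d,r,m+1)$.
\item The classification as recorded lets $N_0$ depend on $m$ and gives no inequality between $N_0(\ldots,m)$ and $N_0(\ldots,m+1)$.
\end{itemize}
To close this case you need information from inside \cite{Fa91}: either Faudree's explicit threshold or his remark itself. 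That is precisely the external input the paper cites.
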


By symmetry, Lemma~\ref{lem:faudree-remark} also yields finiteness of $\RR(F_1\cup K_2,F_2)$ whenever $\RR(F_1,F_2)$ is finite.

\section{The cyclic--cyclic case}

For a graph $X$, let
\[
\rho(X):=\max\Bigl\{\frac{\edg(J)}{\vtx(J)}: J\subseteq X,\ \vtx(J)\ge 1\Bigr\}.
\]
If $X$ contains a cycle, define
\[
\mtwo(X):=\max\Bigl\{\frac{\edg(J)-1}{\vtx(J)-2}: J\subseteq X,\ \vtx(J)\ge 3\Bigr\}.
\]
Whenever $\mtwo(G)\ge \mtwo(H)>1$, set
\[
\mtwo(G,H):=\max\Bigl\{\frac{\edg(J)}{\vtx(J)-2+1/\mtwo(H)}: J\subseteq G,\ \vtx(J)\ge 2\Bigr\}.
\]

We write $G_{n,p}$ for the binomial random graph on vertex set $[n]$ with edge probability $p$.
The next theorem is the special case we need from the resolved Kohayakawa--Kreuter conjecture.

\begin{theorem}[Asymmetric Ramsey threshold theorem \cite{MNS20,KS24,CMSW25}]\label{thm:threshold}
Let $G$ and $H$ be finite graphs, each containing a cycle, and assume $\mtwo(G)\ge \mtwo(H)$. Then there exist constants $0<c<C$ such that
\[
\lim_{n\to\infty}\PP\bigl(G_{n,p}\to(G,H)\bigr)=
\begin{cases}
0,& p\le c n^{-1/\mtwo(G,H)},\\[1mm]
1,& p\ge C n^{-1/\mtwo(G,H)}.
\end{cases}
\]
\end{theorem}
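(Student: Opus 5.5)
The statement is the asymmetric Kohayakawa--Kreuter threshold, so the plan is not to reprove it. Instead I would assemble it from the cited literature and check that the hypotheses and normalizations agree with the ones fixed in this section. First, the hypotheses. Since $G$ and $H$ each contain a cycle, taking $J$ to be a shortest cycle gives $(\edg(J)-1)/(\vtx(J)-2)=(k-1)/(k-2)>1$. Hence $\mtwo(G)\ge\mtwo(H)>1$, and $\mtwo(G,H)$ is well defined. Its denominator is positive for every $J$ with $\vtx(J)\ge2$, because $1/\mtwo(H)>0$. I would also check that restricting to $\vtx(J)\ge2$ in $\mtwo(G,H)$ matches the convention in \cite{KK97,MNS20,CMSW25}. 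Subgraphs on one vertex contribute $0$, so they never affect the maximum; any difference between conventions that include or exclude them is therefore harmless. One further convention to fix is the order of the pair. Some sources write the pair in the opposite order with $\mtwo(H)\ge\mtwo(G)$; since $F\to(G,H)$ iff $F\to(H,G)$, I would simply swap the names of the targets to translate.

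Second, the $1$-statement. I would quote Mousset--Nenadov--Samotij \cite{MNS20}: for all $G,H$ with $\mtwo(G)\ge\mtwo(H)>1$ there is $C$ such that $G_{n,p}\to(G,H)$ a.a.s.\ whenever $p\ge Cn^{-1/\mtwo(G,H)}$. The property $F\to(G,H)$ is monotone increasing. The standard coupling of $G_{n,p}$ with $G_{n,p'}$ for $p\le p'$ then shows that a statement at the threshold scale extends to every larger $p$. So a version of the result stated only for $p=Cn^{-1/\mtwo(G,H)}$ already suffices.

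Third, the $0$-statement, which needs $p\le cn^{-1/\mtwo(G,H)}$. I would take it from Christoph--Martinsson--Steiner--Wigderson \cite{CMSW25}, who prove it for all pairs in which both graphs contain a cycle. Kuperwasser--Samotij \cite{KS24} cover earlier partial ranges, for example the cases $\mtwo(G)=\mtwo(H)$ and various balancedness conditions. Monotonicity again lets us pass from $p=cn^{-1/\mtwo(G,H)}$ to all smaller $p$. Finally, I would take $c$ smaller than $C$, which we may do since decreasing $c$ preserves the $0$-statement.

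The main obstacle is bookkeeping rather than mathematics. The published $0$-statements were for some time conditional on balancedness hypotheses, such as $H$ being strictly $2$-balanced or $G$ being strictly $\mtwo(\cdot,H)$-balanced, or on excluding certain forest-like exceptions. So I would verify that \cite{CMSW25} states the result unconditionally under the sole assumption that both graphs contain a cycle, as the statement above requires. If some degenerate case were excluded there, I would fall back on \cite{KS24} or on the symmetric Rödl--Ruciński theorem \cite{RR95} when $G=H$. Since the later application only needs \emph{some} $p$ with $G_{n,p}\not\to(G,H)$ a.a.s., I would also note that a weaker $0$-statement would suffice there.
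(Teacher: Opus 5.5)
Your proposal is correct and matches the paper, which likewise does not reprove this theorem but quotes it from \cite{MNS20} (the $1$-statement) and \cite{CMSW25} (the $0$-statement, with \cite{KS24} for intermediate cases). Your checks are sound: both graphs containing a cycle gives $\mtwo(G)\ge\mtwo(H)>1$, the $\vtx(J)\ge2$ convention in $\mtwo(G,H)$ is harmless, and monotonicity extends both statements away from the threshold scale.

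One closing remark is wrong, though. Theorem~\ref{thm:cyclic-cyclic} cannot make do with \emph{some} $p$ for which $G_{n,p}\not\to(G,H)$ a.a.s.; that condition is vacuous, e.g.\ $p=n^{-3}$. Its claim $\rho(F_i)>d$ is extracted from the $0$-statement at $p=cn^{-1/d}$ and then played against the $1$-statement at $p=Cn^{-1/d}$. A $0$-statement available only at $p=n^{-1/d'}$ with $d'<d$ would yield just $\rho(F_i)>d'$, and the argument would break.
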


We are now in a position to prove the key cyclic--cyclic infinitude result. 
\begin{theorem}\label{thm:cyclic-cyclic}
If both $G$ and $H$ contain a cycle, then $\RR(G,H)$ is infinite.
\end{theorem}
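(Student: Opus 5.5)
We argue by contradiction, using the random Ramsey threshold of Theorem~\ref{thm:threshold} together with a local-sparseness property of $G_{n,p}$ near the threshold. By the colour-swap symmetry $\RR(G,H)=\RR(H,G)$ we may assume $\mtwo(G)\ge\mtwo(H)$. Since $H$ contains a cycle, $\mtwo(H)>1$, so $\mtwo(G,H)$ is defined. Suppose $\RR(G,H)$ is finite, say $\RR(G,H)=\{F_1,\dots,F_k\}$, and let $M=\max_i \vtx(F_i)$. Every graph $F$ with $F\to(G,H)$ contains some $F_i$ as a subgraph, since deleting edges and vertices from $F$ eventually reaches a Ramsey-minimal graph. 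So with $p=Cn^{-1/\mtwo(G,H)}$, Theorem~\ref{thm:threshold} shows that a.a.s.\ $G_{n,p}$ contains a copy of some $F_i$. In particular, for some fixed $i$, the probability that $F_i\preceq G_{n,p}$ is bounded away from $0$ along a subsequence.

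Next we show this is impossible. Every $F_i$ satisfies $F_i\to(G,H)$, and a dense-enough Ramsey graph must have a subgraph of large density. Concretely, I would prove that every $F\to(G,H)$ has $\rho(F)>\mtwo(G,H)$. Once this holds, the first-moment method applies. For $J\subseteq F_i$ with $\edg(J)/\vtx(J)>\mtwo(G,H)$, we have
\[
\EE[\#J]\le n^{\vtx(J)}p^{\edg(J)}=O\bigl(n^{\vtx(J)-\edg(J)/\mtwo(G,H)}\bigr)\to0,
\]
so a.a.s.\ $G_{n,p}$ contains none of the finitely many $F_i$. This contradicts the first paragraph.

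The main obstacle is the density inequality $\rho(F)>\mtwo(G,H)$ for every $F\to(G,H)$. I would prove it by a deterministic colouring argument, showing that any graph $F$ with $\rho(F)\le\mtwo(G,H)$ admits a colouring with no red $G$ and no blue $H$. The plan has two steps.

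First, using $\mtwo(G,H)\le\mtwo(G)$ together with $\mtwo(H)\le\mtwo(G)$, pass to a greedy or degeneracy ordering of the edges. Copies of $G$ and $H$ overlap so sparsely in such an $F$ that a Nash-Williams-type orientation argument applies.

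Second, colour each edge blue unless this closes a blue $H$, and show that the red graph cannot then contain $G$. If a red copy of $G$ appeared, each of its edges would close a blue $H$. Gluing these copies of $H$ along the edges of $G$ produces a subgraph $J'$ with
\[
\frac{\edg(J')}{\vtx(J')}\ \ge\ \frac{\edg(J)}{\vtx(J)-2+1/\mtwo(H)}
\]
in the limit, contradicting the sparsity of $F$. This is exactly the calculation behind the $0$-statement of Kohayakawa--Kreuter. It may only give the non-strict inequality $\rho(F)\ge \mtwo(G,H)$.

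The non-strict version needs different handling. The fallback is to use the $0$-statement in Theorem~\ref{thm:threshold} itself at $p=cn^{-1/\mtwo(G,H)}$ in place of the deterministic bound. Since the $0$- and $1$-statements differ only by constants, this alone does not suffice. So I would instead take $p=n^{-1/\mtwo(G,H)}\omega$ with $\omega\to\infty$ slowly, and show that each fixed $F_i$ with $\rho(F_i)\ge\mtwo(G,H)$ and $F_i\to(G,H)$ must in fact have $\rho(F_i)>\mtwo(G,H)$. Otherwise, copies of balanced $F_i$ appear only at the threshold scale with probability bounded away from $1$, while the $1$-statement demands probability tending to $1$. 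Making this strictness rigorous is where I expect most of the effort to go.
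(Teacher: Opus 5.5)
Your outer frame matches the second half of the paper's proof: once every $F\to(G,H)$ is known to satisfy $\rho(F)>\mtwo(G,H)$, the first-moment bound at $p=Cn^{-1/\mtwo(G,H)}$ contradicts the $1$-statement. The gap is that you never establish this density inequality, and neither of your proposed routes works.

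\textbf{The deterministic route.} The claim ``$\rho(F)\le\mtwo(G,H)$ implies $F\not\to(G,H)$'' is essentially the content of the Kohayakawa--Kreuter $0$-statement; it was the crux of its recent resolution. A greedy rule ``colour blue unless this closes a blue $H$'', followed by gluing one copy of $H$ onto each edge of a red $G$, gives no contradiction. For $G=H=K_3$ the glued graph has $9$ edges on $6$ vertices, so its density is $3/2<2=\mtwo(K_3,K_3)$. Nothing in your sketch supplies the ``in the limit'' iteration.

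\textbf{The fallback.} This is also wrong as stated. At $p=\omega n^{-1/\mtwo(G,H)}$ with $\omega\to\infty$, every subgraph of a fixed graph with $\rho=\mtwo(G,H)$ has expected count tending to infinity. Such a graph therefore appears a.a.s., and nothing contradicts the $1$-statement.

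\textbf{The missing idea.} The $0$-statement of Theorem~\ref{thm:threshold} already gives the strict inequality. Your remark that the two statements ``differ only by constants'' is irrelevant for a fixed graph. This is how the paper argues.

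Suppose some $F\to(G,H)$ had $\rho(F)\le d:=\mtwo(G,H)$, and take $p=cn^{-1/d}$. For every subgraph $L\subseteq F$ with at least one vertex, $n^{\vtx(L)}p^{\edg(L)}=c^{\edg(L)}n^{\vtx(L)-\edg(L)/d}=\Omega(1)$. Hence the number $X_F$ of copies of $F$ satisfies $\EE X_F=\Omega(1)$ and $\EE X_F^2/(\EE X_F)^2=O(1)$. The second moment method then gives $\PP(F\preceq G_{n,p})\ge\eta>0$, so $\PP(G_{n,p}\to(G,H))\ge\eta$. This contradicts the $0$-statement however small $c$ is.

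Applied to each $F_i$, this yields $\rho(F_i)>d$, and your first-moment step finishes the proof. Even the non-strict inequality $\rho(F_i)\ge d$ would have sufficed had you stayed at $p=Cn^{-1/d}$. By Harris's inequality, the probability that no $F_i$ appears is at least a product of finitely many factors, each bounded away from $0$, which already contradicts the $1$-statement.
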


\begin{proof}
Because $F\to(G,H)$ if and only if $F\to(H,G)$, we may assume $\mtwo(G)\ge \mtwo(H)$ and set
\[
d:=\mtwo(G,H).
\]
Suppose, for a contradiction, that $\RR(G,H)$ is finite. Let $F_1,\dots,F_t$ be representatives of all isomorphism classes in $\RR(G,H)$. Then for every graph $X$,
\begin{equation}\label{eq:equiv}
X\to(G,H)\quad\Longleftrightarrow\quad \text{$F_i\preceq X$ for some $i\in\{1,\dots,t\}$.}
\end{equation}
Indeed, if $X\to(G,H)$, choose a subgraph of $X$ which is minimal with respect to inclusion among all Ramsey graphs for $(G,H)$; this subgraph belongs to $\RR(G,H)$ and hence is isomorphic to one of the representatives $F_i$, that is, $F_i\preceq X$ for some $i$. The converse follows from monotonicity.

We first show that every $F_i$ has density strictly larger than $d$.

\smallskip
\noindent\emph{Claim.} For each $i\in\{1,\dots,t\}$, one has $\rho(F_i)>d$.

\smallskip
\noindent\emph{Proof of the claim.} Suppose instead that $\rho(F_i)\le d$ for some fixed $i$. Let $X_i$ count the labeled copies of $F_i$ in $G_{n,p}$ with $p=cn^{-1/d}$, where $c$ is the constant from Theorem~\ref{thm:threshold}. Since $F_i$ is fixed,
\[
\EE X_i=\Theta\!\left(n^{\vtx(F_i)}p^{\edg(F_i)}\right)=\Theta\!\left(n^{\vtx(F_i)-\edg(F_i)/d}\right).
\]
Because $\edg(F_i)/\vtx(F_i)\le \rho(F_i)\le d$, the exponent is nonnegative, so $\EE X_i$ is bounded away from $0$.

For the second moment, group ordered pairs of labeled copies by the isomorphism type of their intersection subgraph. Here the intersection is identified with a subgraph $L$ of $F_i$, and the empty intersection is allowed. This gives
\[
\EE X_i^2=\sum_{L\subseteq F_i} O\!\left(n^{2\vtx(F_i)-\vtx(L)}p^{2\edg(F_i)-\edg(L)}\right).
\]
Dividing by $(\EE X_i)^2$, we obtain
\[
\frac{\EE X_i^2}{(\EE X_i)^2}=O\!\left(\sum_{L\subseteq F_i} n^{-\vtx(L)+\edg(L)/d}\right).
\]
For every nonempty $L\subseteq F_i$, we have $\edg(L)/\vtx(L)\le \rho(F_i)\le d$, so $-\vtx(L)+\edg(L)/d\le 0$; the empty intersection contributes $1$. Hence $\EE X_i^2=O((\EE X_i)^2)$, and the second moment method yields
\[
\PP(X_i>0)\ge \frac{(\EE X_i)^2}{\EE X_i^2}\ge \eta_i>0
\]
for some constant $\eta_i$ and all sufficiently large $n$. Since $X_i>0$ implies $F_i\preceq G_{n,p}$, equivalence \eqref{eq:equiv} gives
\[
\limsup_{n\to\infty}\PP\bigl(G_{n,p}\to(G,H)\bigr)\ge \eta_i>0,
\]
contradicting the $0$-statement in Theorem~\ref{thm:threshold}. This proves the claim.
\medskip

Now fix $i$. By the claim, choose a subgraph $J_i\subseteq F_i$ such that
\[
\frac{\edg(J_i)}{\vtx(J_i)}=\rho(F_i)>d.
\]
Let $Y_i$ count the labeled copies of $J_i$ in $G_{n,p}$ with $p=Cn^{-1/d}$, where $C$ is the constant from Theorem~\ref{thm:threshold}. Then
\[
\EE Y_i=\Theta\!\left(n^{\vtx(J_i)}p^{\edg(J_i)}\right)=\Theta\!\left(n^{\vtx(J_i)-\edg(J_i)/d}\right)\to 0,
\]
because $\edg(J_i)/\vtx(J_i)=\rho(F_i)>d$. By Markov's inequality,
\[
\PP(F_i\preceq G_{n,p})\le \PP(Y_i>0)\le \EE Y_i\to 0.
\]
Summing over $i$ and using \eqref{eq:equiv}, we get
\[
\PP\bigl(G_{n,p}\to(G,H)\bigr)
\le \sum_{i=1}^t \PP(F_i\preceq G_{n,p})
\longrightarrow 0,
\]
contradicting the $1$-statement in Theorem~\ref{thm:threshold}. Hence $\RR(G,H)$ is infinite.
\end{proof}

\section{The two 1981 conjectures}

We now derive the original one-sided conjecture.

\begin{theorem}\label{thm:one-sided}
Let $G$ and $H$ be finite graphs with at least one edge and without isolated vertices. Assume that neither $G$ nor $H$ has a $K_2$-component. If $G$ and $H$ are not both odd stars, then $\RR(G,H)$ is infinite.
\end{theorem}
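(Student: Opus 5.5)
The plan is a case analysis according to whether $G$ and $H$ are forests, invoking the results of Section~2 and Theorem~\ref{thm:cyclic-cyclic}. Since neither graph has a $K_2$-component and neither has isolated vertices, every component of $G$ and of $H$ has at least two edges. In particular, neither graph is a matching. This observation drives the whole argument, because every classical result we need requires exactly this hypothesis.

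\emph{Case 1: both $G$ and $H$ contain a cycle.} Then $\RR(G,H)$ is infinite by Theorem~\ref{thm:cyclic-cyclic}.

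\emph{Case 2: exactly one of them contains a cycle.} Using the symmetry $\RR(G,H)=\RR(H,G)$, we may assume $G$ contains a cycle and $H$ is a forest. By the remark above, $H$ is not a matching, so Theorem~\ref{thm:luczak} applied with $F=H$ shows that $\RR(G,H)$ is infinite.

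\emph{Case 3: both $G$ and $H$ are forests.} Neither is a matching. If at least one of them has a component that is not a star, Theorem~\ref{thm:forests-nonstar} gives infiniteness directly. Otherwise both are star forests, and they have no $K_2$-components by hypothesis. Theorem~\ref{thm:star-forests} then says that $\RR(G,H)$ is finite only when both $G$ and $H$ are odd stars, and this is excluded by assumption. Hence $\RR(G,H)$ is infinite.

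There is no genuine obstacle here: the heavy lifting is Theorem~\ref{thm:cyclic-cyclic} together with the cited classical results. The only point that needs care is to check that each invoked theorem's hypotheses hold exactly as stated. Specifically, the ``not a matching'' conditions follow from the absence of $K_2$-components and isolated vertices. In Case~3, ``odd star'' must be read as a single star $S(r)$ with $r$ odd, which automatically has $r\ge3$ since $S(1)=K_2$ is excluded. With these checks, the three cases exhaust all possibilities and the theorem follows.
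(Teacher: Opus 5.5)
Your proposal is correct and is essentially the paper's own proof. It uses the same three-way split by whether $G$ and $H$ contain cycles, with Theorem~\ref{thm:cyclic-cyclic}, Theorem~\ref{thm:luczak}, and Theorems~\ref{thm:forests-nonstar} and~\ref{thm:star-forests} covering the respective cases; the paper only lists the cases in the opposite order, and your remark that every component has at least two edges is the same ``not a matching'' check the paper relies on.
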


\begin{proof}
We distinguish three cases.

\smallskip
\noindent\emph{Case 1: both $G$ and $H$ are forests.}
If both are star forests, then Theorem~\ref{thm:star-forests} applies, because neither graph has a $K_2$-component; under this hypothesis an odd star has at least three edges. Thus $\RR(G,H)$ is infinite unless both graphs are odd stars. If at least one of $G$ and $H$ has a component that is not a star, then neither graph is a matching and Theorem~\ref{thm:forests-nonstar} implies that $\RR(G,H)$ is infinite.

\smallskip
\noindent\emph{Case 2: exactly one of $G$ and $H$ is a forest.}
Without loss of generality, let $H$ be a forest and let $G$ contain a cycle. Since $H$ has no $K_2$-component, it is not a matching. Therefore Theorem~\ref{thm:luczak} yields that $\RR(G,H)$ is infinite.

\smallskip
\noindent\emph{Case 3: both $G$ and $H$ contain a cycle.}
This is exactly Theorem~\ref{thm:cyclic-cyclic}.
\end{proof}

The following consequence is convenient for the proof of Conjecture~\ref{conj:add-matchings}.

\begin{corollary}\label{cor:finite-forest}
If $\RR(G,H)$ is finite and neither $G$ nor $H$ is a matching, then both $G$ and $H$ are forests.
\end{corollary}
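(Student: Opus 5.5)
The plan is to argue by contraposition, splitting according to how many of $G$ and $H$ contain a cycle. Assume $\RR(G,H)$ is finite and that neither $G$ nor $H$ is a matching. We must rule out the possibility that $G$ or $H$ contains a cycle.

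First, suppose both $G$ and $H$ contain a cycle. Then Theorem~\ref{thm:cyclic-cyclic} shows directly that $\RR(G,H)$ is infinite. This contradicts our assumption.

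Second, suppose exactly one of them contains a cycle. By the symmetry $\RR(G,H)=\RR(H,G)$ we may take $G$ to contain a cycle and $H$ to be a forest. Since $H$ is not a matching by hypothesis, Theorem~\ref{thm:luczak} applied with $F=H$ gives that $\RR(G,H)$ is infinite. This is again a contradiction.

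The remaining possibility is that neither graph contains a cycle, i.e.\ both $G$ and $H$ are forests, which is the claim. Note that we do not use Theorem~\ref{thm:one-sided} itself here: its hypothesis that there are no $K_2$-components is not available. Instead we invoke the two cyclic ingredients directly, and those require only that the forest side is not a matching.

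There is essentially no obstacle; the only point needing care is applying Łuczak's theorem with the correct orientation, and that orientation is justified by color-swap symmetry.
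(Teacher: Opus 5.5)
Your proof is correct and is essentially the paper's argument. Both assume one graph has a cycle, then invoke Theorem~\ref{thm:cyclic-cyclic} if the other graph also has a cycle, and Theorem~\ref{thm:luczak} (with the non-matching forest on the other side) if it does not.
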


\begin{proof}
Suppose one of the graphs, say $G$, contains a cycle. If $H$ also contains a cycle, then Theorem~\ref{thm:cyclic-cyclic} gives a contradiction. If $H$ is a forest, then $H$ is not a matching by assumption, and Theorem~\ref{thm:luczak} again gives a contradiction. Therefore neither graph contains a cycle.
\end{proof}

We can now prove the matching-extension conjecture.

\begin{theorem}\label{thm:add-matchings}
Let $G$ and $H$ be finite graphs with at least one edge and without isolated vertices. If $\RR(G,H)$ is finite, then $\RR(G\cup \ell K_2, H\cup mK_2)$ is finite for all integers $\ell,m\ge 0$.
\end{theorem}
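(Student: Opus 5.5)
The plan is to split according to whether one of $G$, $H$ is a matching, and to reduce everything else to Corollary~\ref{cor:finite-forest} and Lemma~\ref{lem:faudree-remark}.

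\emph{Case 1: one of $G$, $H$ is a matching.} Say $H=tK_2$ with $t\ge 1$. Then $H\cup mK_2=(t+m)K_2$ is still a matching, so Theorem~\ref{thm:matching} gives that $\RR(G\cup \ell K_2,(t+m)K_2)$ is finite for every $\ell\ge 0$. Nothing else is needed here. The case where $G$ is a matching is symmetric, since $\RR(G,H)=\RR(H,G)$.

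\emph{Case 2: neither $G$ nor $H$ is a matching.} Since $\RR(G,H)$ is finite, Corollary~\ref{cor:finite-forest} shows that both $G$ and $H$ are forests. We then add the matching edges one at a time. Suppose $\RR(G\cup jK_2,H\cup kK_2)$ is finite. Both graphs are forests, so Lemma~\ref{lem:faudree-remark} gives finiteness of $\RR(G\cup jK_2,H\cup (k+1)K_2)$. Applying the symmetric form of the lemma noted after its statement gives finiteness of $\RR(G\cup (j+1)K_2,H\cup kK_2)$. Adding a disjoint $K_2$ to a forest leaves a forest without isolated vertices, so the lemma can be applied again at each step. A double induction, first on $m$ and then on $\ell$, starting from $(j,k)=(0,0)$, reaches $(\ell,m)$.

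The main obstacle is making sure the Faudree remark really applies in the form stated, for arbitrary forests with no further restriction. Since the paper records it as Lemma~\ref{lem:faudree-remark} with exactly that hypothesis, we use it as stated. The only substantive new input is Corollary~\ref{cor:finite-forest}, which rests on the cyclic--cyclic case (Theorem~\ref{thm:cyclic-cyclic}). It is needed because without it a finite pair containing cyclic graphs could not be fed into the forest lemma.
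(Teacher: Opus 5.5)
Your proposal is correct and follows the paper's proof essentially step for step. The matching case goes through Theorem~\ref{thm:matching}. Otherwise, Corollary~\ref{cor:finite-forest} reduces to forests, and Lemma~\ref{lem:faudree-remark} and its symmetric form are applied repeatedly. The only difference is cosmetic: you phrase the iteration as a double induction. The paper instead adds $m$ edges to $H$ first, then swaps coordinates and adds $\ell$ edges to $G$.
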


\begin{proof}
Assume that $\RR(G,H)$ is finite.

If one of $G$ and $H$ is a matching, say $G=tK_2$, then $G\cup \ell K_2=(t+\ell)K_2$ is again a matching. Hence Theorem~\ref{thm:matching} immediately gives that $\RR(G\cup \ell K_2, H\cup mK_2)$ is finite.

Assume now that neither $G$ nor $H$ is a matching. By Corollary~\ref{cor:finite-forest}, both $G$ and $H$ are forests. Starting from $\RR(G,H)$ finite, Lemma~\ref{lem:faudree-remark} implies that $\RR(G,H\cup K_2)$ is finite. Iterating, we obtain finiteness of $\RR(G,H\cup mK_2)$ for every $m\ge 0$. By symmetry, $\RR(H\cup mK_2,G)$ is also finite. Applying Lemma~\ref{lem:faudree-remark} another $\ell$ times to the second coordinate gives finiteness of $\RR(H\cup mK_2,G\cup \ell K_2)$, and symmetry once more yields finiteness of $\RR(G\cup \ell K_2,H\cup mK_2)$.
\end{proof}

Theorem~\ref{thm:one-sided} is precisely Conjecture~\ref{conj:one-sided}. Thus the original 1981 one-sided conjecture is true.

\section{The corrected Burr--Faudree characterization}

We first record Faudree's forest classification in a form that separates the matching case from the genuinely star-forest cases.

\begin{theorem}[\cite{Fa91}]\label{cor:faudree}
Let $F_1$ and $F_2$ be forests without isolated vertices. Then $\RR(F_1,F_2)$ is finite if and only if, after possibly interchanging $F_1$ and $F_2$, one of the following alternatives holds:
\begin{enumerate}[label=\textup{(\roman*)}, leftmargin=2.8em]
    \item $F_2=nK_2$ for some integer $n>0$;
    \item $F_1=S(m_1)\cup mK_2$ and $F_2=S(n_1)\cup nK_2$, where $m,n\ge 0$ and $m_1,n_1\ge 3$ are odd;
    \item for some integers $d\ge 2$ and $m,n\ge 0$,
    \[
        F_1=\bigcup_{i=1}^d S(m_i)\cup mK_2,
        \qquad
        F_2=S(r)\cup nK_2,
    \]
    where
    \[
        m_1\ge \cdots \ge m_d\ge 2,
    \]
    $r\ge 3$, both $m_1$ and $r$ are odd, $m_1\ge r+m_2-1$, and $n\ge N_0$. Here $N_0=N_0(m_1,\ldots,m_d,r,m)$ is the threshold integer supplied by Faudree's theorem for this fixed star-forest data; in particular, $N_0$ does not depend on $n$.
\end{enumerate}
\end{theorem}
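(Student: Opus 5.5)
This statement is a restatement of Faudree's classification of Ramsey-finite forest pairs, so the plan is to derive it from the main theorem of \cite{Fa91} together with the classical inputs of Section~2. It is not to be reproved from scratch. The work is to put each Ramsey-finite forest pair in a canonical form and check that the finite families in \cite{Fa91} become exactly alternatives (i)--(iii).

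\emph{Sufficiency.} If $F_2=nK_2$, Theorem~\ref{thm:matching} applies directly. For (ii), I start from the pair $(S(m_1),S(n_1))$ with $m_1,n_1\ge3$ odd. This pair is Ramsey-finite by Theorem~\ref{thm:star-forests}, since neither star has a $K_2$-component. I then adjoin $m$ and $n$ copies of $K_2$ using Lemma~\ref{lem:faudree-remark} repeatedly, together with the symmetry of $\RR$, exactly as in the proof of Theorem~\ref{thm:add-matchings}. For (iii), finiteness is the content of Faudree's positive result for star forests: the condition $m_1\ge r+m_2-1$ with $m_1,r$ odd ensures that a suitable large star of odd size is forced. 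The threshold $N_0$ is the quantity Faudree's theorem supplies. By Lemma~\ref{lem:faudree-remark}, finiteness at $n=N_0$ propagates to all $n\ge N_0$, which is consistent with $N_0$ being a threshold.

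\emph{Necessity.} Let $\RR(F_1,F_2)$ be finite and suppose neither forest is a matching, since otherwise (i) holds. Then Theorem~\ref{thm:forests-nonstar} forces both forests to be star forests. Write each as its non-$K_2$ star components together with a matching part. If both have exactly one non-$K_2$ star, I must show both stars are odd; this is the odd-star criterion in Faudree's classification, extending Theorem~\ref{thm:star-forests} to allow matching components. If one side has $d\ge2$ nontrivial stars, Faudree's theorem shows that finiteness forces the other side to have exactly one nontrivial star $S(r)$. It also forces the parity and size conditions and $n\ge N_0$, and these give (iii) after ordering $m_1\ge\cdots\ge m_d$.

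The main obstacle is the bookkeeping in this translation. The parameters of \cite{Fa91} must be matched to ours, including the convention $S(1)=K_2$ that moves $K_2$ stars into the matching part. I must also check that $N_0$ depends only on $(m_1,\dots,m_d,r,m)$ and not on $n$. In addition, I must confirm that the overlap with (ii) when $d=1$ is excluded by the hypothesis $d\ge2$. The strategy is to quote Faudree's theorem verbatim, then check case by case that each of its finite families lands in exactly one of (i)--(iii), and conversely that each of (i)--(iii) lies in one of its families.
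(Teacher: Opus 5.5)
Your proposal is correct and takes essentially the paper's route: the paper gives no independent proof of this statement and simply quotes it from Faudree \cite{Fa91}. You treat alternatives (i) and (ii) via Theorems~\ref{thm:matching} and \ref{thm:star-forests} together with Lemma~\ref{lem:faudree-remark}, reduce to star forests via Theorem~\ref{thm:forests-nonstar}, and defer the remaining cases to Faudree's theorem; this is the same citation-based argument the paper uses in proving Theorem~\ref{thm:corrected-burr}, and your observation that Lemma~\ref{lem:faudree-remark} makes the set of admissible $n$ upward closed is what justifies reading $n\ge N_0$ as a genuine threshold.
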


\begin{proof}[Proof of Theorem~\ref{thm:corrected-burr}]
First suppose that $\RR(G,H)$ is finite. If one of $G$ and $H$ is a matching, then, after interchanging the two graphs if necessary, alternative~\textup{(i)} holds. Assume then that neither graph is a matching. By Corollary~\ref{cor:finite-forest}, both $G$ and $H$ are forests. Applying Theorem~\ref{cor:faudree}, and excluding its matching alternative, gives alternatives~\textup{(ii)} and \textup{(iii)} of the present theorem after the harmless renaming of parameters used there.

Conversely, if alternative~\textup{(i)} holds, then $\RR(G,H)$ is finite by Theorem~\ref{thm:matching}. If alternative~\textup{(ii)} holds, then the pair of odd stars is Ramsey-finite by Theorem~\ref{thm:star-forests}, and adjoining the displayed matchings preserves finiteness by Theorem~\ref{thm:add-matchings}. Finally, alternative~\textup{(iii)} is finite by Theorem~\ref{cor:faudree}. This proves the characterization.
\end{proof}

\begin{remark}
Burr's original survey formulation corresponds to alternatives~\textup{(i)} and \textup{(ii)} of Theorem~\ref{thm:corrected-burr}. Alternative~\textup{(iii)} is genuinely needed. For example, Theorem~11 of \cite{BERS81} states that if $\ell$, $n$, and $s$ are positive integers with $\ell$ and $n$ odd and
\[
    n\ge \ell+s-1,
\]
then
\[
    \RR\bigl(S(n)\cup S(s),\, S(\ell)\cup kK_2\bigr)
\]
is finite for every
\[
    k\ge (n+2\ell+s-2)^2+1.
\]
Taking $n=5$, $\ell=3$, $s=2$, and $k=122$ shows that
\[
    \RR\bigl(S(5)\cup S(2),\, S(3)\cup 122K_2\bigr)
\]
is finite. The corresponding Ramsey-finite pair is not covered by the two alternatives in Burr's original statement but is covered by Theorem~\ref{thm:corrected-burr}\textup{(iii)}.
\end{remark}

Combining Theorem~\ref{thm:one-sided}, Theorem~\ref{thm:add-matchings}, and Theorem~\ref{thm:corrected-burr}, we obtain Theorem~\ref{thm:main-summary} together with the complete finite classification.


\begin{thebibliography}{99}

\bibitem{BBL22}
J.~Bamberg, A.~Bishnoi, and T.~Lesgourgues,
\emph{The minimum degree of Ramsey-minimal graphs for cliques},
Bull. London Math. Soc. \textbf{54} (2022), 1827--1838.

\bibitem{BDKS01}
B.~Bollob\'as, J.~Donadelli, Y.~Kohayakawa, and R.~H. Schelp,
\emph{Ramsey minimal graphs},
J. Braz. Comput. Soc. \textbf{7} (2001), 27--37.

\bibitem{BEL76}
S.~A. Burr, P.~Erd\H{o}s, and L.~Lov\'asz,
\emph{On graphs of Ramsey type},
Ars Combin. \textbf{1} (1976), 167--190.

\bibitem{BFS85}
S.~A. Burr, R.~J. Faudree, and R.~H. Schelp,
\emph{On graphs with Ramsey-infinite blocks},
European J. Combin. \textbf{6} (1985), 129--132.

\bibitem{BHS04}
M.~Borowiecki, M.~Ha\l{}uszczak, and E.~Sidorowicz,
\emph{On Ramsey minimal graphs},
Discrete Math. \textbf{286} (2004), 37--43.

\bibitem{BH12}
M.~Borowiecka-Olszewska and M.~Ha\l{}uszczak,
\emph{On Ramsey $(K_{1,m},\mathcal G)$-minimal graphs},
Discrete Math. \textbf{313} (2013), 1843--1855.

\bibitem{BSS05}
M.~Borowiecki, I.~Schiermeyer, and E.~Sidorowicz,
\emph{Ramsey $(K_{1,2},K_3)$-minimal graphs},
Electron. J. Combin. \textbf{12} (2005), Paper R20, 15 pp.


\bibitem{BEFS78}
S.~A. Burr, P.~Erd\H{o}s, R.~J. Faudree, and R.~H. Schelp,
\emph{A class of Ramsey-finite graphs},
Proc. 9th Southeastern Conf. on Combinatorics, Graph Theory and Computing,
1978, pp.~171--180.


\bibitem{BEFS80}
S.~A. Burr, P.~Erd\H{o}s, R.~J. Faudree, and R.~H. Schelp,
\emph{Ramsey minimal graphs for the pair star, connected graph},
Studia Sci. Math. Hungar. \textbf{15} (1980), 265--273.

\bibitem{BEFRS81}
S.~A. Burr, P.~Erd\H{o}s, R.~J. Faudree, C.~C. Rousseau, and R.~H. Schelp,
\emph{Ramsey-minimal graphs for matchings},
in \emph{The Theory and Applications of Graphs} (Kalamazoo, Mich., 1980),
Wiley, New York, 1981, pp.~159--168.

\bibitem{BEFRS82}
S.~A. Burr, P.~Erd\H{o}s, R.~J. Faudree, C.~C. Rousseau, and R.~H. Schelp,
\emph{Ramsey-minimal graphs for forests},
Discrete Math. \textbf{38} (1982), 23--32.

\bibitem{BERS81}
S.~A. Burr, P.~Erd\H{o}s, R.~J. Faudree, C.~C. Rousseau, and R.~H. Schelp,
\emph{Ramsey-minimal graphs for star-forests},
Discrete Math. \textbf{33} (1981), 227--237.

\bibitem{Bu79}
S.~A. Burr,
\emph{A survey of noncomplete Ramsey theory for graphs},
in \emph{Topics in Graph Theory},
Ann. New York Acad. Sci. \textbf{328} (1979), 58--75.

\bibitem{CMSW25}
M.~Christoph, A.~Martinsson, R.~Steiner, and Y.~Wigderson,
\emph{Resolution of the Kohayakawa--Kreuter conjecture},
Proc. Lond. Math. Soc. \textbf{130} (2025), e70013.

\bibitem{Fa91}
R.~Faudree,
\emph{Ramsey minimal graphs for forests},
Ars Combin. \textbf{31} (1991), 117--124.

\bibitem{Lu94}
T.~{\L}uczak,
\emph{On Ramsey minimal graphs},
Electron. J. Combin. \textbf{1} (1994), Research Paper~4, 4~pp.


\bibitem{FGLPS16}
J.~Fox, A.~Grinshpun, A.~Liebenau, Y.~Person, and T.~Szab\'o,
\emph{On the minimum degree of Ramsey-minimal graphs for multiple colours},
J. Combin. Theory Ser. B \textbf{120} (2016), 64--82.

\bibitem{HRS18}
H.~H\`an, V.~R\"odl, and T.~Szab\'o,
\emph{Vertex Folkman numbers and the minimum degree of Ramsey-minimal graphs},
SIAM J. Discrete Math. \textbf{32} (2018), 826--838.

\bibitem{KK97}
Y.~Kohayakawa and B.~Kreuter,
\emph{Threshold functions for asymmetric Ramsey properties involving cycles},
Random Structures Algorithms \textbf{11} (1997), 245--276.

\bibitem{KS24}
E.~Kuperwasser and W.~Samotij,
\emph{The list-Ramsey threshold for families of graphs},
Combin. Probab. Comput. \textbf{33} (2024), 829--851.

\bibitem{MNS20}
F.~Mousset, R.~Nenadov, and W.~Samotij,
\emph{Towards the Kohayakawa--Kreuter conjecture on asymmetric Ramsey properties},
Combin. Probab. Comput. \textbf{29} (2020), 943--955.

\bibitem{NR78a}
J.~Ne\v{s}et\v{r}il and V.~R\"odl,
\emph{On Ramsey minimal graphs},
Colloq. Internationaux C.N.R.S. \textbf{260} (1978), 307--308.

\bibitem{NR78b}
J.~Ne\v{s}et\v{r}il and V.~R\"odl,
\emph{The structure of critical Ramsey graphs},
Acta Math. Acad. Sci. Hungar. \textbf{32} (1978), 295--300.

\bibitem{RR95}
V.~R\"odl and A.~Ruci\'nski,
\emph{Threshold functions for Ramsey properties},
J. Amer. Math. Soc. \textbf{8} (1995), 917--942.

\bibitem{RS08}
V.~R\"odl and M.~Siggers,
\emph{On Ramsey minimal graphs},
SIAM J. Discrete Math. \textbf{22} (2008), 467--488.

\bibitem{SZZ10}
T.~Szab\'o, P.~Zumstein, and S.~Z\"urcher,
\emph{On the minimum degree of Ramsey-minimal graphs},
J. Graph Theory \textbf{64} (2010), 150--164.

\bibitem{WBAS17}
K.~Wijaya, E.~T. Baskoro, H.~Assiyatun, and D.~Suprijanto,
\emph{On Ramsey $(mK_2,H)$-minimal graphs},
Graphs Combin. \textbf{33} (2017), 233--243.

\end{thebibliography}
\end{document}